\newcommand{\tre}{\text{Re}}
\newcommand{\hg}{\hat{g}}
\newcommand{\intii}{\int_{-\infty}^\infty}
\newcommand{\supp}{{\rm supp}}
\theoremstyle{definition}
\newtheorem{theorem}{Theorem}
\newtheorem*{millertheorem}{Refined 1-Level Density}
\newtheorem*{lemma}{Lemma}
\begin{document}
\title{The quadratic character experiment}
\author{Jeffrey Stopple}
\email{stopple@math.ucsb.edu}
\subjclass{11Y16; 11Y35}

\begin{abstract}
A fast new algorithm is used compute the zeros of  the quadratic character $L$-functions for all negative fundamental discriminants with absolute value $10^{12}<d<10^{12}+10^7$.  These are compared to the 1-level density, including various lower order terms. These terms come from, on the one hand the Explicit Formula, and on the other the $L$-functions Ratios Conjecture.  The latter give a much better fit to the data, providing numerical evidence for the conjecture.
\end{abstract}
\maketitle

\section{Introduction.}
\subsection*{Predictions} Standard conjectures \cite{KS} predict that the low lying zeros of quadratic Dirichlet $L$-functions should be distributed according to a symplectic random matrix model.  To make this more precise, we'll introduce some notation.  
Let $\chi_d$ be a real, primitive character modulo $d$, and suppose furthermore that $\chi_d(-1)\cdot d$ is a fundamental discriminant.
Let $g(\tau)$ be a Schwartz class test function.   Then the 1-level density for the zeros $1/2+\gamma_d$ of $L(s,\chi_d)$ should satisfy
\begin{multline}\label{Eq:0}
\frac1{X^\ast}\sum_{d \le X} \sum_{\gamma_d}
g\left(\gamma_d \frac{\log X}{2\pi}\right)  =\\
\int_{-\infty}^\infty g(\tau) \left(1 - \frac{\sin(2\pi \tau)}{2\pi
\tau}\right) d\tau + O\left(\frac{1}{\log X}\right),
\end{multline}
where $X^*$ is the cardinality of fundamental discriminants $\chi_d(-1)\cdot d$ with $d<X$.   This is a theorem \cite{OS} if the support of the Fourier transform of $g$ is suitably restricted.

Recently Conrey and Snaith \cite{CS1} made a precise prediction for the lower order arithmetic terms in the 1-level density.
Their prediction is conditional, assuming the $L$-functions Ratios Conjecture \cite{CFZ}.  Miller \cite{Miller} then proved  (under typical restrictions for the test function $g(\tau)$) that these lower order terms exist and agree with the prediction in \cite{CS1}.

\subsection*{Experiments} Zeros of Dirichlet $L$-functions were first computed by Davies and Haselgrove, by Spira, and by Rumley.  Rubinstein, as one portion of his thesis \cite{Rub}, was the first to compute enough low lying zeros to meaningfully test (\ref{Eq:0}).  However, the numerical methods developed in \cite{Rub} were optimized to compute $L(1/2+it,\chi_d)$ for large real  $t$ rather than large $d$.

\subsection*{This paper} The next section develops an algorithm to compute low lying zeros ($0\le t<1.$) which is fast for large $d$.  This is a modification of the idea behind \cite{Stopple}.  The subsequent section has a discussion of the data from the computation of the zeros of approximately $3\cdot 10^6$ quadratic character $L$-functions for negative fundamental discriminants $-d$ with $d>10.^{12}$.  This is followed by some implementation notes, and an Appendix on Miller's \lq refined\rq\ 1-level density and the $L$-functions Ratios Conjecture.

\subsubsection*{Acknowledgements}  Thanks to Mike Rubinstein for sharing his data from \cite{Rub}, and David Farmer for pointing me towards \cite{Miller}  Thanks to both Steven J. Miller and to the anonymous referee for their careful reading of the manuscript and numerous helpful suggestions.

\section{Algorithm}
We are going to compute the $L$-function on the critical line by means of an approximate functional equation, an idea that goes back to Lavrik and was first implemented by Weinberger \cite{W}.  With $\chi$ a real character modulo $d$, let $a=(1-\chi(-1))/2$, and with $t>0$ use $s$ to denote $(1/2+it+a)/2$.\footnote{We are not actually assuming the Generalized Riemann Hypothesis, but we are only looking for zeros on the critical line.}  Define
\begin{equation}\label{Eq:1}
Z(t,\chi)=\xi(1/2+it,\chi)
=\sum_n \chi(n) n^a\,2\, \text{Re}(G(s,\pi n^2/d)),
\end{equation}
where
\[
G(s,x)=x^{-s}\Gamma(s,x)=\int_1^\infty \exp(-yx)y^s \frac{dy}{y}.
\]

As in \cite{W}, the tail of the series, the sum of terms $n>N$, is bounded by $d^2\exp(-N^2\pi/d)/(\pi N)^2$, so if we want to compute to $D$ digits of accuracy, we should have
\begin{equation}\label{Eq:fine}
d^2\exp(-N^2\pi/d)/(\pi N)^2<10^{-D}.
\end{equation} 
Certainly
\begin{equation}\label{Eq:crude}
N\ge d^{1/2}\log(d^210^D)^{1/2}\pi^{-1/2}
\end{equation}
would suffice; later we'll see we can do better given any particular $d$.

Differentiating with respect to $x$ under the integral defining $G(s,x)$ we see that
\begin{equation}\label{Eq:diffintegral}
\frac{d}{dx}G(s,x)=-\int_1^\infty\exp(-xy)y^{s+1}\frac{dy}{y}=-G(s+1,x),
\end{equation}
while integration by parts, on the other hand, gives
\begin{equation}\label{Eq:intbyparts}
G(s+1,x)=\frac{\exp(-x)}{x}+\frac{s}{x}G(s,x).
\end{equation}
Equations (\ref{Eq:diffintegral}) and (\ref{Eq:intbyparts}) give a nice recursive relation  for all the derivatives $G^{(k)}(s,x)$ in terms of $G(s,x)$.    This, in turn, motivates a consideration of Taylor expansions.

Suppose we compute $G(s,x)$ by a Taylor series expansion (in the second variable, centered at $x_0$) to $B$ terms, where $B$ is a parameter to be determined.
\begin{lemma}
We can bound the remainder in the Taylor expansion by a function $R_B(x,x_0)$ (defined below) which satisfies
\begin{equation}\label{Eq:remainder}
R_B(x,x_0)\le\frac{\left(\frac{x}{x_0}-1\right)^B}{B!}\Gamma(B,x_0)\le \frac{\left(\frac{x}{x_0}-1\right)^B}{B}
\end{equation}
\end{lemma}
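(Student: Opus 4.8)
The plan is to exploit the fact that $G(s,\cdot)$ reproduces itself under differentiation in the second variable. Iterating (\ref{Eq:diffintegral}) — equivalently, differentiating under the integral sign in $G(s,x)=\int_1^\infty \exp(-yx)\,y^{s-1}\,dy$ — gives $\frac{d^k}{dx^k}G(s,x)=(-1)^k\,G(s+k,x)$, so the degree $B-1$ Taylor polynomial of $G(s,\cdot)$ about $x_0$ is $\sum_{k=0}^{B-1}\frac{(-1)^k}{k!}(x-x_0)^k\,G(s+k,x_0)$. Subtracting this from $G(s,x)$ and moving the finite sum back inside the integral expresses the remainder exactly as
\[
R_B=\int_1^\infty \exp(-yx_0)\,y^{s-1}\Big(\exp\big(-y(x-x_0)\big)-\sum_{k=0}^{B-1}\frac{\big(-y(x-x_0)\big)^k}{k!}\Big)\,dy ,
\]
so the whole estimate reduces to the classical bound for the tail of the exponential series; note this is an identity for each fixed $B$, so no convergence of the full Taylor series is needed.

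Next I would bound the bracketed factor. With $u=y(x-x_0)\ge 0$ (we may assume $x\ge x_0$; the other case is identical with $|x-x_0|$ throughout), Taylor's theorem with the integral form of the remainder, or the Lagrange form applied to real and imaginary parts, gives $|\exp(-u)-\sum_{k<B}(-u)^k/k!|\le u^B/B!$. Substituting $u=y(x-x_0)$ and pulling $(x-x_0)^B/B!$ out of the integral leaves $\int_1^\infty \exp(-yx_0)\,y^{\tre(s)+B-1}\,dy$; the substitution $t=yx_0$ turns this into $x_0^{-(\tre(s)+B)}\,\Gamma(\tre(s)+B,x_0)$. Pulling one factor $x_0^{-B}$ out front converts $(x-x_0)^B/x_0^B$ into $(x/x_0-1)^B$, and the resulting product is the function I would take as $R_B(x,x_0)$; by construction it dominates the Taylor remainder.

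It remains to reduce the incomplete Gamma factor to the stated form and conclude. The second inequality of (\ref{Eq:remainder}) is just the trivial domination $\Gamma(B,x_0)=\int_{x_0}^\infty e^{-t}t^{B-1}\,dt\le\int_0^\infty e^{-t}t^{B-1}\,dt=\Gamma(B)=(B-1)!$, so that $\Gamma(B,x_0)/B!\le 1/B$. The first inequality is the genuine obstacle: one must absorb the auxiliary factor $x_0^{-\tre(s)}$ and the shift of the Gamma parameter by $\tre(s)$ (recall $0<\tre(s)<1$) so as to land on the clean $\Gamma(B,x_0)$ — this is the step that does the real work. A tidier variant, which makes the factor $1/B$ appear without any sleight of hand, is not to bound the bracket pointwise but to insert its own integral representation $\frac{(-1)^B}{(B-1)!}\int_0^u(u-v)^{B-1}e^{-v}\,dv$, interchange the two integrations, and integrate the rescaled inner variable against $(1-t)^{B-1}$ over $[0,1]$; the factor $\int_0^1(1-t)^{B-1}\,dt=1/B$ then drops out directly.
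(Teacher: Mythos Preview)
Your approach is genuinely different from the paper's, and the gap you flag yourself (``the first inequality is the genuine obstacle \ldots\ this is the step that does the real work'') is real and not fixable along the lines you sketch. After your pointwise bound on the exponential tail you arrive at
\[
R_B(x,x_0)\;=\;\frac{(x/x_0-1)^B}{B!}\,x_0^{-\tre(s)}\,\Gamma(\tre(s)+B,x_0),
\]
and you then need $x_0^{-\alpha}\Gamma(\alpha+B,x_0)\le\Gamma(B,x_0)$ with $\alpha=\tre(s)\in(0,1)$. But this inequality goes the wrong way: writing both sides as integrals over $t\ge x_0$, you are comparing $(t/x_0)^\alpha\ge 1$ with $1$, so in fact $x_0^{-\alpha}\Gamma(\alpha+B,x_0)\ge\Gamma(B,x_0)$. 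Your ``tidier variant'' does not help either: after the substitution and interchange you still land on the same $G(\tre(s)+B,x_0)$, hence the same surplus factor $x_0^{-\tre(s)}$. Even if you first absorb $y^{\tre(s)-1}\le 1$ (using $y\ge 1$) before applying the tail bound, you get $\Gamma(B+1,x_0)/x_0$ in place of $\Gamma(B,x_0)$, and again $\Gamma(B+1,x_0)/x_0\ge\Gamma(B,x_0)$; this matters because in the application $x_0=\pi F_j/d$ can be extremely small.

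What the paper does instead is a two-step maneuver that sidesteps the extra factor entirely. First, it uses $\tre(s)<1$ and $y\ge 1$ to bound $|G^{(B)}(s,x)|\le\int_1^\infty e^{-xy}y^B\,dy$, stripping the $s$-dependence at the level of the derivative rather than the tail. Second, it takes the integral form of the Taylor remainder, swaps the order of integration, and integrates by parts in the inner variable to obtain the exact recursion
\[
R_B(x,x_0)+R_{B-1}(x,x_0)\;=\;\frac{(x/x_0-1)^B}{B!}\,\Gamma(B,x_0).
\]
Since both remainders are nonnegative, the first inequality in (\ref{Eq:remainder}) follows immediately; the second is the trivial $\Gamma(B,x_0)\le(B-1)!$, exactly as you say. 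The integration by parts is precisely what buys the drop from $\Gamma(B+1,\cdot)/x_0$ to $\Gamma(B,\cdot)$ that your direct tail estimate cannot achieve.
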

\begin{proof}
We have
\[
\left|G^{(B)}(s,x)\right|=\left|G(s+B,x)\right|\le \int_1^\infty \exp(-xy)y^B dy,
\]
since $s$ is in the critical strip.  By the integral formula for the remainder in Taylor's theorem, we can bound that remainder by
\begin{align*}
R_B(x,x_0)\overset{\text{def.}}=&\frac{1}{B!}\int_{x_0}^x\int_1^\infty\exp(-uy)y^Bdy\,(x-u)^B du.\\
\intertext{Change the order of integration and let $t=x-u$ to get}
=&\frac{-1}{B!}\int_1^\infty\exp(-xy)\int_{x-x_0}^0\exp(-ty)t^Bdt\,y^B dy.\\
\intertext{Now integrate by parts in the $t$ integral to get}
=&\frac{1}{B!}\int_1^\infty\exp(-xy)(x-x_0)^B\exp((x-x_0)y)y^{B-1}dy \\
&\qquad\qquad\qquad-R_{B-1}(x,x_0).
\end{align*}
Or, in other words,
\begin{align*}
R_{B}(x,x_0)+R_{B-1}(x,x_0)=&\frac{(x-x_0)^B}{B!}G(B,x_0)\\
=&\frac{\left(\frac{x}{x_0}-1\right)^B}{B!}\Gamma(B,x_0).
\end{align*}
This implies the first inequality.  For the second, we observe
\begin{multline*}
\Gamma(B,x_0)=\int_{x_0}^\infty \exp(-y)y^B\frac{dy}{y}\\
\le \int_0^\infty \exp(-y)y^B \frac{dy}{y}=\Gamma(B)=(B-1)!
\end{multline*}
\end{proof}
The first inequality is stronger, so it's good for the actual computation.  The second is weaker, but simple enough to be useful in proving the theorem.

Now we're ready to put the Taylor expansions to good use.  Similar to the method of \cite{Stopple}, we partition the set
$
\{n^2\,|\, 1\le n\le N\}
$
into intervals 
\[
I_j= \left[F_j,F_{j+1}\right),
\]
for $j=1,\ldots,T$, where $F_j$ is the $j$th Fibonacci number.    We then compute the function $G$ by a Taylor expansion in the second variable, centered at $\pi F_j/d$,  and truncated to $B$ terms:
\[
 2\,\text{Re}(G(s,\pi n^2/d))\approx
\sum_{k=0}^B  G_{j,k}(t)(\pi/d)^k\cdot(n^2- F_j)^k,
\]
where
\begin{equation}\label{Eq:defgjk}
G_{j,k}(t)= 2\,\text{Re}(G^{(k)}(s,\pi F_j/d))/k!.
\end{equation}

\begin{theorem}
We can compute  $Z(t,\chi)$ as
\begin{equation}\label{Eq:thm}
Z(t,\chi)=
\sum_{j=1}^T\sum_{k=0}^{B} G_{j,k}(t)(\pi/d)^k\sum_{n^2\in
I_j}\chi(n)n^a( n^2-F_j)^k
\end{equation}
to $D$ digits of accuracy, where $T$ and $B$ are both $O(\log(d))$, the implied constants depending on $D$.
\end{theorem}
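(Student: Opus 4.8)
The plan is to bound the total error by splitting it into two independent pieces — the \emph{tail error} from truncating the series (\ref{Eq:1}) at $n=N$, and the aggregate \emph{Taylor-truncation error} from replacing each $2\,\text{Re}(G(s,\pi n^2/d))$ by its truncated Taylor expansion about the left endpoint of the Fibonacci interval containing $n^2$ — and to arrange that each piece is at most $\tfrac12 10^{-D}$.

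First I would take $N$ as in (\ref{Eq:crude}); by the tail bound quoted from \cite{W} this makes $\sum_{n>N}$ contribute at most $\tfrac12 10^{-D}$, and since $\log(d^2 10^D)=O(\log d)$ it forces $N=O(d^{1/2}(\log d)^{1/2})$, hence $\log N = O(\log d)$. Then I would let $T$ be minimal with $F_{T+1}>N^2$, so the intervals $I_1,\dots,I_T$ cover $\{\,n^2 : 1\le n\le N\,\}$; because $F_j\sim\varphi^{j}/\sqrt5$ grows geometrically, $F_{T+1}>N^2$ is achieved once $T=O(\log N)=O(\log d)$. (Any $n$ with $N^2<n^2<F_{T+1}$ swept into the sum is harmless — those terms already lie inside the tail shown to be negligible — or one simply chooses $N$ so that $N^2=F_{T+1}-1$.)

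The heart of the matter is that the Taylor remainder decays geometrically \emph{at a rate independent of $j$}. On $I_j$ one has $x=\pi n^2/d$ and $x_0=\pi F_j/d$, so
\[
\frac{x}{x_0}-1=\frac{n^2-F_j}{F_j}<\frac{F_{j+1}-F_j}{F_j}=\frac{F_{j-1}}{F_j}.
\]
The defining recursion $F_{j+1}=F_j+F_{j-1}$ is exactly what keeps $F_{j-1}/F_j$ bounded away from $1$: these ratios stay $\le\tfrac23$ once $j$ is past the first one or two intervals (which are empty or reduce to the single point $n^2=F_j$ contributing only its exact constant term), and they tend to $1/\varphi\approx0.618$. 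Hence, by the Lemma, for every $n^2\in I_j$,
\[
R_B\!\left(\frac{\pi n^2}{d},\,\frac{\pi F_j}{d}\right)\le\frac{(2/3)^B}{B}\le\left(\tfrac23\right)^B,
\]
so the error committed on $I_j$ in replacing $2\,\text{Re}(G(s,\pi n^2/d))$ by its truncated Taylor polynomial is at most $2(2/3)^B$. Multiplying by $|\chi(n)|\,n^a\le N$ and summing over the at most $N$ indices $n$ shows the total Taylor error is at most $2N^2(2/3)^B$. Requiring $2N^2(2/3)^B\le\tfrac12 10^{-D}$, i.e. $B\log(3/2)\ge D\log10+2\log N+\log4$, and using $\log N=O(\log d)$, we see this holds for some $B=O(\log d)$ with implied constant depending on $D$.

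On the event that all these Taylor approximations are valid, I would substitute them into (\ref{Eq:1}), group the finitely many summands according to which $I_j$ contains $n^2$, and reindex the (finite, hence freely rearrangeable) double sum to reach the right-hand side of (\ref{Eq:thm}) with $G_{j,k}(t)$ given by (\ref{Eq:defgjk}); adding the two error budgets gives total error below $10^{-D}$, and both $T$ and $B$ are $O(\log d)$, the implied constants depending on $D$. The one genuinely delicate point is the uniform estimate $x/x_0-1\le\tfrac23<1$: without a bound strictly below $1$ the remainder $R_B$ would not decay at a rate common to all intervals, and no single $B=O(\log d)$ could work across all of them — which is precisely why the Fibonacci partition is used in place of, say, a dyadic one, for which $x/x_0-1$ can come arbitrarily close to $1$.
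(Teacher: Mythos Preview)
Your proof is correct and follows essentially the same strategy as the paper: bound the tail via (\ref{Eq:crude}), choose $T$ so that $F_{T+1}>N^2$, and control the Taylor remainder uniformly in $j$ through the Lemma using the fact that $F_{j+1}/F_j-1$ is bounded strictly below $1$. The one noteworthy difference lies in how you accumulate the Taylor errors. The paper treats the per-term truncation errors as independent with standard deviation $\epsilon$ and bounds the aggregate by a root-mean-square estimate $\epsilon\cdot d^{3/4}$, then solves $(\Phi-1)^B<10^{-D}d^{-3/4}$; you instead take the straightforward worst-case route, bounding the total absolute error by $2N^2(2/3)^B\sim d\log d\cdot(2/3)^B$. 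Both give $B=O(\log d)$, but your version avoids any appeal to statistical independence of the errors, at the price of a slightly larger implied constant. You are also a bit more careful than the paper in using the sharp uniform bound $F_{j-1}/F_j\le 2/3$ (valid for $j\ge3$) rather than $\Phi-1$, which is in fact exceeded for some $j$ (e.g.\ $F_5/F_4-1=2/3>\Phi-1$); your remark explaining why a dyadic partition would fail here is apt.
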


The expression
\begin{equation}\label{Eq:pre}
C_{jk}\overset{\text{def.}}=\sum_{n^2\in I_j}\chi(n)n^a( n^2-F_j)^k
\end{equation}
is a precomputation independent of $s$ in \emph{integers} which is $O(N\cdot B)= O(d^{1/2}\log(d)^2)$.
Subsequently, individual evaluations of $Z(t,\chi)$  cost only $O(T\cdot B)= O(\log(d)^2)$.

\begin{proof}
Of course, the outermost sum on $j\le T$ and the innermost sum on $n^2$ in $I_j$ combine to give the squares of all $n\le N$; the middle sum giving the needed Taylor expansions.
We need $N^2$ to be in the last interval $I_T$, so
\[
N^2<F_{T+1}\approx \Phi^{T+1}/\sqrt{5},\quad\text{where}\quad\Phi=\frac{1+\sqrt{5}}{2};
\]
with $N\ll d^{1/2+\epsilon}$ by (\ref{Eq:crude}), this implies that $T\ll \log(d)$ suffices.  

This is all well and good, but we need to show that using Taylor expansions at points spaced in what is essentially a geometric progression, does not require an unreasonable number of terms $B$ in each expansion in order to compute accurately.
Use $|\chi(n)|\le1$, $n^a\le  n$, and the rough estimate $d^{1/2}$ for the $L$-series truncation parameter $N$.  Assuming the errors we make in computing each
$
G(s,\pi n^2/d)
$ are independent with standard deviation $\epsilon$, then the standard error in the sum (\ref{Eq:1}) is bounded by \cite{D}
\[
\left(\sum_{n=1}^{d^{1/2}}(n\epsilon)^2\right)^{1/2}\ll \epsilon \cdot d^{3/4},
\]
where we approximated a sum by an integral.  We want $\epsilon \cdot d^{3/4}<10^{-D}$, or
\[
\epsilon< 10^{-D}d^{-3/4},
\]
which will determine how many terms $B$ we need in each Taylor expansion.  We'll use the weaker inequality in the Lemma with
\[
x_0=\pi F_j/d,\quad x<\pi F_{j+1}/d,
\]
which makes the error
\[
\epsilon<\frac{\left(\frac{F_{j+1}}{F_j}-1\right)^B}{B}<\left(\Phi-1\right)^B.
\]
Thus we want
\[
\left(\Phi-1\right)^B<10^{-D}d^{-3/4},\quad \text{or}\quad 10^D d^{3/4}<\left(\Phi-1\right)^{-B}=\left(\frac{\sqrt{5}-1}{2}\right)^B,
\]
and so $B=O(\log(d))$ suffices.
\end{proof}

For a single function evaluation (for example, determining whether $Z(0.,\chi)>0.$) this algorithm is no improvement over \cite{W}; summing the series requires $O(d\log(d))^{1/2}$ terms by (\ref{Eq:crude}).  If one wants to do an arbitrarily large number of function evaluations, the improvement is spectacular:  from exponential down to polynomial (in terms of the number of digits of $d$ which is $\approx \log(d)$).  This is deceptive, though, because what one really wants to do is find the all zeros with, say, $0\le t<1$.  (Larger $t$ intervals requires computing $G(s,x)$ via the methods of \cite{Rub} which in turn necessitates re-doing the precomputation.)  Since there are $O(\log(d))$ such zeros and each can be found with $O(1)$ evaluations, the precomputation still dominates as a theoretical result.  But as Jan L. A. van de Snepscheut\footnote{not Yogi Berra.} wrote \begin{quote} \lq\lq In theory, there is no difference between theory and practice. But, in practice, there is.\rq\rq\end{quote}

\section{Data}

\begin{figure}
\centering
\includegraphics[scale=1, viewport=0 10 360 250,clip]{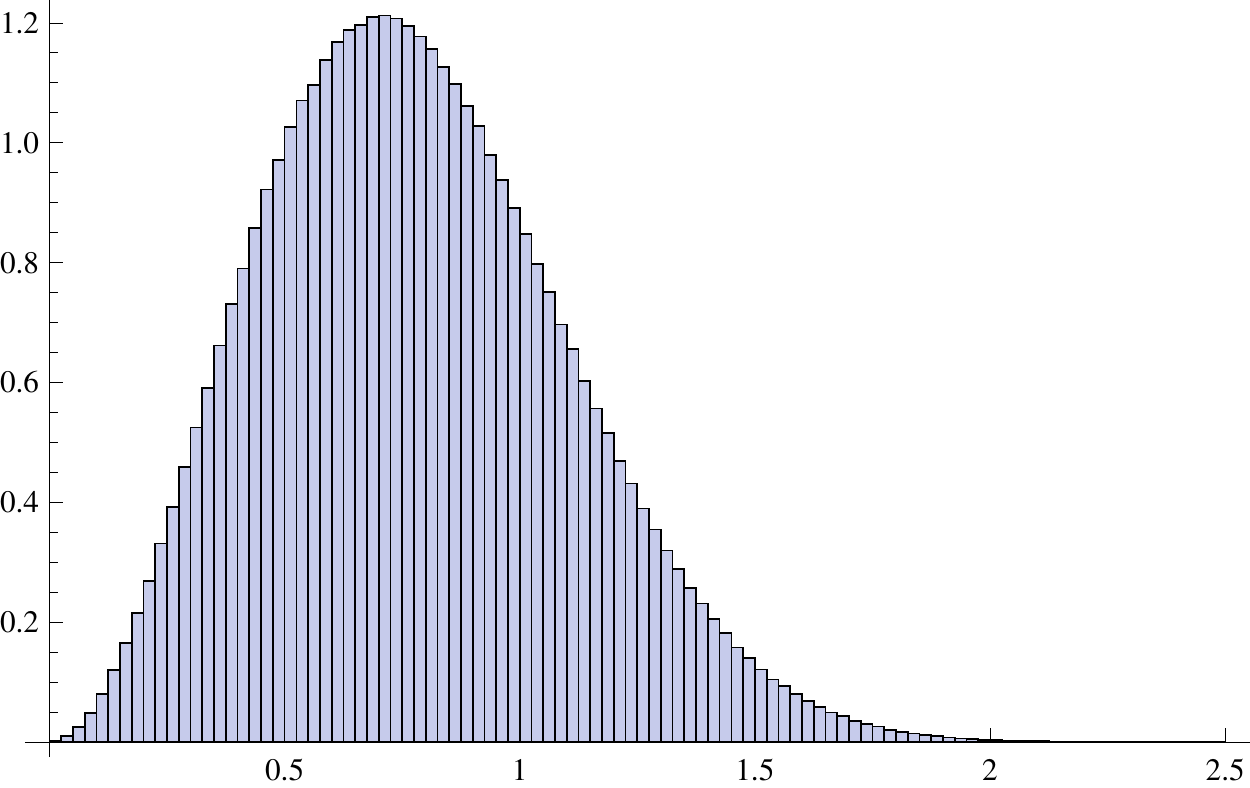}
\caption{Histogram of the lowest zero.}\label{F:first}
\end{figure}

\begin{sidewaysfigure}
\includegraphics[scale=1.5, viewport=0 0 350 250,clip]{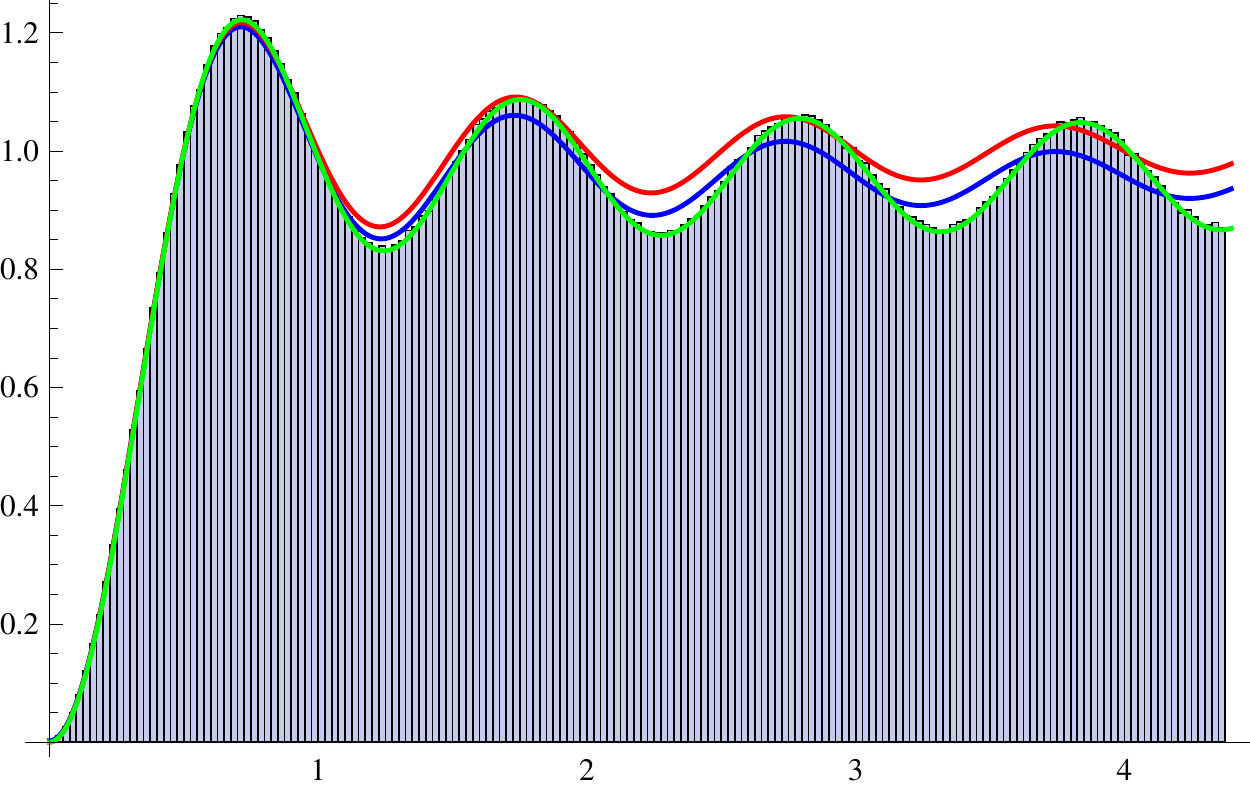}
\caption{Histogram of all zeros.}\label{F:all}
\end{sidewaysfigure}

Zeros with $t<1.$ were computed for all of the $1,\!039,\!654$  negative fundamental discriminants $-d$ in the range $10^{12}\le d\le 10^{12}+ 10^7$, a total of $12,\!202,\!567$ zeros.  Figure \ref{F:first} shows a histogram for the imaginary part of the lowest lying zero, rescaled by $\log(10^{12})/(2\pi)$.   The lowest zero found was at $t=0.0013104755$ corresponding to  the discriminant $-1,\!000,\!008,\!582,\!815$.  

Figure \ref{F:all} shows the histogram of imaginary parts of all the zeros, again rescaled by $\log(10^{12})/(2\pi)\sim 4.39761$.  The upper curve (in red) is the main term $1-\sin(2\pi \tau)/(2\pi \tau)$ for the symplectic random matrix model for the 1-level density.  The lower curve (in blue) includes also terms from (\ref{Eq:explicit}) which are $O(1/\log(X))$.  (In the notation of the Appendix, $X=10^{12}$ and $\Delta X= 10^7$.)  This version is derived from the Explicit Formula.  The fit is visibly poor for these values of $X$ and $\Delta X$.

As usual,  we assumed in (\ref{Eq:0}) that $\supp(\hg) \subset (-\sigma, \sigma) \subset (-1,1)$, so that
\[
 \intii g(\tau) \left(-\frac{\sin(2\pi \tau)}{2\pi \tau}\right)\, d\tau = -g(0)/2.
\]
It is really the $-g(0)/2$ term which appears in the proof via the Explicit Formula.  Miller \cite{Miller} derives a version of the 1-level density in which the term $-g(0)/2$ is replaced by a more complicated expression, see (\ref{Eq:mess}) in the Appendix.  This version is shown in green in Figure \ref{F:all}.  The fit appears to be very good.   Since (\ref{Eq:mess}) was first derived in \cite{CS1} from the $L$-functions Ratios Conjecture, the data seems to be good numerical evidence for the conjecture.
This also seems to indicate is that there is a lot of structure in the $O(1/\log(X))$ error in (\ref{Eq:0}), and the $L$-functions Ratios Conjecture captures that structure.

The data are available at
\begin{center}
\texttt{http://www.math.ucsb.edu/$\sim$stopple/quadratic.experiment}
\end{center}

\section{Implementation Notes}\label{S:IN}

\subsection*{Error estimates} With $D=15$ digits of accuracy and $d$ near $10^{12}$, the crude estimate (\ref{Eq:crude}) requires $N=5.4\cdot 10^6$ terms in the series.  We can actually do a little better.  Using this as a starting estimate, \emph{Mathematica}'s \texttt{FindRoot} uses a variant of the secant method to determine that $N=4.3\cdot 10^6$ satisfies (\ref{Eq:fine}), for a savings of better than $20 \% $.

The stronger inequality in the Lemma determines good values for the Taylor series truncation parameter $B$ in computing
$
G(s,\pi n^2/d)
$.
Consider the case $a=1$, i.e. a negative discriminant.
Assuming the errors (\ref{Eq:remainder}) in the terms are independent, and using $|\chi(n)|\le1$, $n^a= n$,   then the standard error in the sum over all $n^2$ in $I_j$ is bounded by \cite{D}
\[
\frac{\Gamma(B,\pi F_j/d)}{B!}\left(\sum_{F_j\le n^2<F_{j+1}}n^2(n^2/F_j-1)^{2B}\right)^{1/2}.
\]
We can estimate the sum by an integral
\[
\int_{\sqrt{F_j}}^{\sqrt{F_{j+1}}}t^2(t^2/F_j-1)^{2B}dt\approx F_j^{3/2}\int_1^{\Phi^{1/2}}u^2(u^2-1)^{2B}du.
\]
So the error from the sum over $n^2$ in $I_j$ is about
\begin{equation}\label{Eq:rhs}
\frac{\Gamma(B,\pi F_j/d)}{B!}F_j^{3/4}\left(\int_1^{\Phi^{1/2}}u^2(u^2-1)^{2B}du\right)^{1/2}
\end{equation}
For $d$ near $10^{12}$, we need $T= 65$ intervals, and it is easy to compute (\ref{Eq:rhs})  in \emph{Mathematica} for various $B$.  We see that $B=B(j)$ should increase linearly from $84$ at $j=31$ to $107$ at $j=65$, in order that the total of all errors is only about $10.^{-15}$.  (For $j<31$, the intervals $I_j$ contain not many more than $B(j)$ squares $n^2$, so the contribution of these $n$, namely $1\le n\le 1160$, is computed directly.)

The case $a=0$, i.e. positive discriminant is treated similarly.  It turns out one needs $B(j)$ to increase linearly from $70$ at $j=31$ to $78$ at $j=65$.

\subsection*{Algorithms} To find fundamental discriminants, I check the congruence condition, and test for divisibility by the squares of the first $200$ primes.  (The 94 examples divisible by the square of a prime larger than the 200th prime were easily identified with \emph{Mathematica} and removed from the data by hand.)

To compute $\Gamma(s)$ I use the Lanczos algorithm as in \cite{recipes,godfrey}.  Precomputed values of $\Gamma(s)$ allow efficient computation of incomplete Gamma functions $\Gamma(s,x)$ for various $x$ by the methods of \cite{recipes}: series expansion for $x<6.$ and continued fractions for $x\ge 6$.  These algorithms compare well with those implemented in \emph{Mathematica}, giving both absolute and relative error no worse than $10.^{-18}$ for the relevant range of $x$ and $|\text{Im}(s)|<1.$

To find zeros of $Z(t,\chi)$ the computation stepped through values in increments of $t$ of size $2\pi/\log(10^{12}/(2\pi))/50$, i.e. $1/50$th the mean gap between zeros.  When a sign change was observed, Ridder's method \cite{recipes} was used to find the root.  No effort was made to verify the GRH, or that all zeros of $Z(t,\chi)$ with $t<1.$ were located.  (However, the obvious check that $Z(0.,\chi)>0.$ was made.)

\subsection*{Hardware} Computations were done on a $3.0$ GHz $8$-core Mac Pro.  Both the integer arithmetic and also the recursion for the derivatives $G^{(k)}(s,x)$ were done with GMP 4.2.1 \cite{GMP} (ported to the Intel Core 2 Duo $64$ bit processor by Jason Worth Martin \cite{martin}.)\ \   For the rest of the floating point computations,  the C types \texttt{long double}, \texttt{long double complex} sufficed.  

\subsection*{Parallelization} Most of the computation consists of computing the values
$
(n^2-F_j)^k
$
in (\ref{Eq:pre}).  Since this is independent of $d$, there is a gain in efficiency by computing the quantities $C_{jk}$ in (\ref{Eq:pre}) for $8$ discriminants at a time.  Parallelism is easily implemented using Pthreads.  The contribution of the intervals $I_j$ is computed in $T$ separate threads for $8$ discriminants at a time.  Once all the precomputation is done, the zeros of $Z(t,\chi)$ for each of the $8$ characters $\chi$ are computed in $8$ separate threads.

\subsection*{Testing} Accuracy of computed zeros was tested three ways: first by recomputing well know examples \cite{Watkins1,Watkins2,W} of moderate sized  discriminants such as $-115,\!147$ and $-175,\!990,\!483$.  Second, I also implemented the method of \cite{W} directly in \emph{Mathematica} and compared a few examples for discriminants with absolute greater than $10^{12}$, with agreement to 15 digits.  Third, I compared with the unpublished data from Rubinstein's thesis \cite{Rub}.  This includes 3601 prime discriminants $-d$ with $10^{12}\le d\le 10^{12}+2\cdot 10^{5}$.  The data was in agreement with his to the 10 digits of accuracy he computed.  

\section{Appendix: Refined 1 Level Density}
This Appendix closely follows \cite{Miller} to determine the 1-level density, including lower order terms, for the family of quadratic Dirichlet $L$-functions.    Instead of considering the set of all fundamental $d<X$, I adapted the proof for 
\[
\mathcal{F}(X)=\{X<|d|<X+\Delta X\}
\]
Where Miller treats the case when $\chi_d$ is an even function, i.e. $d>0$, I instead considered $\chi_d$ odd function, $-d<0$.
Throughout I assumed about $\Delta X$ that
\begin{equation}\label{hyp}
X^{1/2}\log(X)=o (\Delta X)\quad\text{and} \quad \Delta X=o(X).
\end{equation}
\begin{millertheorem}[Miller]  Let $g$ be an even Schwartz test
function such that $\supp(\hg) \subset
(-\sigma, \sigma)$, where $\hg$ denotes the Fourier transform of $g$.    Let
\[ 
A'(r) = \sum_p \frac{\log p}{(p+1)(p^{1+2r}-1)}. 
\]
Then
\begin{multline} \label{Eq:explicit}
\frac1{\sharp \mathcal{F}(X)}\sum_{d \in \mathcal{F}(X)} \sum_{\gamma_d}
g\left(\gamma_d \frac{\log X}{2\pi}\right)  =
 \intii
g(\tau)\left(1 -\frac{\sin(2\pi \tau)}{2\pi \tau}\right)d\tau+\\ 
   \frac1{\log
X} \intii g(\tau) \Bigg[-\log(\pi)+\tre
\frac{\Gamma'}{\Gamma}\left(\frac34+\frac{i\pi \tau}{\log X}\right)+\\2\tre\frac{\zeta'}{\zeta}\left(1+
\frac{4\pi i\tau}{\log X}\right) + 2\tre A'\left(\frac{2\pi i \tau}{\log X}\right) \Bigg] d\tau \\ 
+o\left(\frac{1}{\log X}\right) +O\left(\frac{X^{\sigma/2} \log^6 X}{\Delta X^{1/2}}\right). 
\end{multline} 
\end{millertheorem}
Of course, to get the $O(X^{\sigma/2} \log^6 X/\Delta X^{1/2})$ error to be $o(1/\log X)$ we would need to restrict the support of $\hg$ to be $\subset (-1/2,1/2)$.


Figure \ref{F:three} shows each of the three non-constant terms which are absorbed in the  $O(1/\log(X))$ error in (\ref{Eq:0}), all on the same scale of $2\pi\tau/\log(X)=t$. 
The $\Gamma^\prime/\Gamma$ term (in green) is slowly increasing and very smooth, while $A^\prime$ (in blue) is small and wobbly.
Observe that when $\zeta(1/2+i\gamma)=0$, the contribution at $t=\gamma/2$ of $\zeta^\prime/\zeta(1+2it)$ (in red)  is positive and large.
This follows from \cite[Theorem 9.6(A)]{Tit}, which says that
\[
\frac{\zeta^\prime(s)}{\zeta(s)}=\sum_{|t-\gamma|\le1}\frac{1}{s-\rho}+O(\log(t)),
\]
so up to a small error, the logarithmic derivative is determined by the nearby zeros $\rho$.  This \lq resurgence\rq\ of the zeros of $\zeta(s)$ does not play much role in the data ($t<1.$) presented here.

\begin{figure*}
\begin{center}
\includegraphics[scale=1, viewport=0 0 300 200,clip]{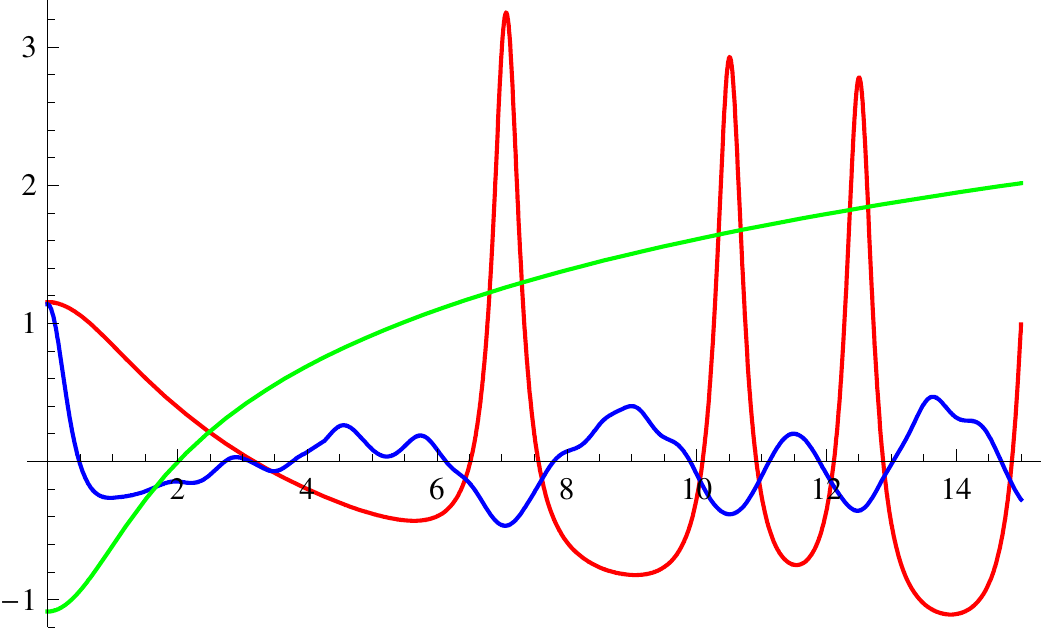}
\caption{All three non constant $O(1/\log(X))$ terms.  The $\zeta^\prime/\zeta$ term is in red, the $\Gamma^\prime/\Gamma$ term in green, and the $A^\prime$ term in blue.}\label{F:three}
\end{center}
\end{figure*}

Since the fit of the data to even the \lq refined\rq\ 1-level density is poor, we turn instead to the prediction inspired by the $L$-functions Ratios Conjecture.  The term $-\sin(2\pi \tau)/(2\pi\tau)$ is replaced by the real part of
\begin{multline}\label{Eq:mess}
R(\tau,X)=
\frac{-2}{\sharp \mathcal{F}(X)\log X}  \sum_{d \in \mathcal{F}(X)}\exp\left(-2\pi i
\tau \frac{\log(d/\pi)}{\log X}\right) \times\\
\frac{\Gamma\left(\frac34-\frac{\pi
i \tau}{\log X}\right)}{\Gamma\left(\frac34+\frac{\pi i \tau}{\log
X}\right)}\frac{\zeta(2)
 \zeta\left(1 - \frac{4\pi i \tau}{\log
X}\right)}{\zeta\left(-2-\frac{4\pi i
\tau}{\log X}\right)}.
\end{multline}
(We have simplified the notation from \cite[(1.6)]{Miller}; see also his Lemma 2.4).  Miller shows \cite[Lemma 2.1]{Miller} that on the Riemann Hypothesis,
\[
\intii g(\tau)R(\tau,X)d\tau=-g(0)/2+O(X^{-3/4(1-\sigma)+\epsilon}),
\]
and unconditionally with a larger error.  Here as usual $\supp(\hg) \subset
(-\sigma, \sigma)$.

In order that the prediction not depend on the specific discriminants in $\mathcal F(X)$, we use summation by parts \cite[Remark 2.3]{Miller} to estimate
\begin{multline*}
\sum_{d<X}\exp\left(-2\pi i \tau \frac{\log(d/\pi)}{\log X}\right)=\\
\frac{3 X}{\pi^2}\left(\frac{X}{\pi}\right)^{-2\pi i\tau/\log(X)}\frac{1}{1-2\pi i\tau/\log(X)}+O(X^{1/2})
\end{multline*}
and similarly with the sum over $d<X+\Delta X$.  The difference of these, divided by $\sharp \mathcal{F}(X)=3\Delta X/\pi^2 +O(X+\Delta X)^{1/2}$ is used in an estimate of (\ref{Eq:mess}) and denoted $R_{\text{est}}(\tau,X)$.  Figure \ref{F:gzero} shows how $R_{\text{est}}(\tau,X)$ (in red)  compares to $-\sin(2\pi \tau)/(2\pi\tau)$ (blue).

The graph in green in Figure \ref{F:all} has $-\sin(2\pi \tau)/(2\pi\tau)$ replaced by $R_{\text{est}}(X)$, and also includes the other $O(1/\log(X))$ terms.  

\begin{figure*}
\begin{center}
\includegraphics[scale=1, viewport=0 0 300 200,clip]{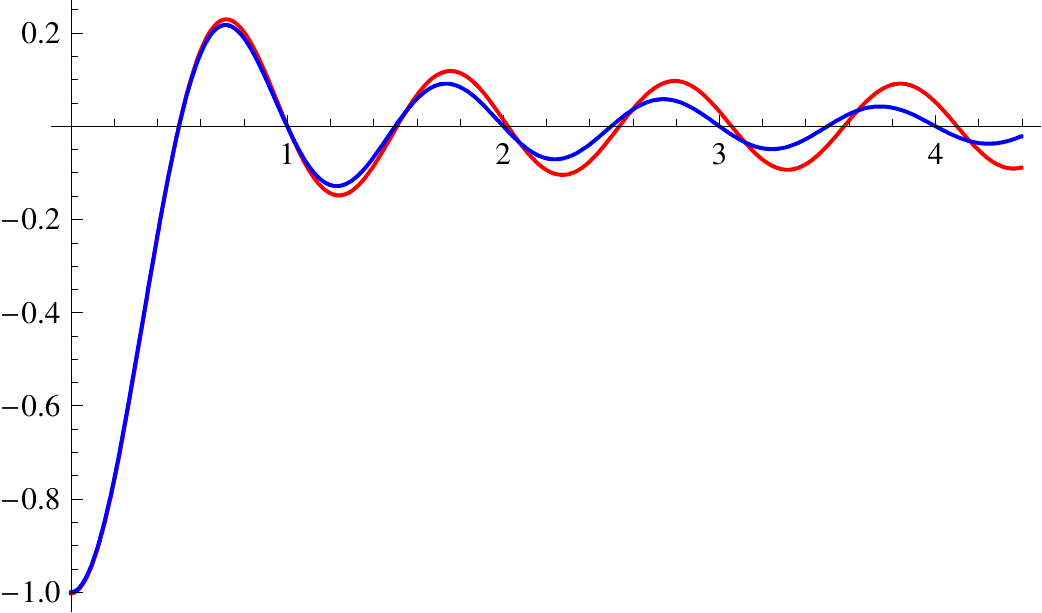}
\caption{$-\sin(2\pi \tau)/(2\pi \tau) $ (in blue) v. $R_{\text{est}}(\tau,X)$ (in red)}\label{F:gzero}
\end{center}
\end{figure*}

\end{document}